\theoremstyle{plain}
\newtheorem{theorem}{Theorem}
\newtheorem{thmx}{Theorem}
\newtheorem{lemx}[thmx]{Lemma}
\newtheorem{corollary}[theorem]{Corollary}
\newcommand{\CC}{{\mathbb C}}
\newcommand{\DD}{{\mathbb D}}
\newcommand{\RR}{{\mathbb R}}
\newcommand{\TT}{{\mathbb T}}
\newcommand{\cG}{{\mathcal G}}
\newcommand{\cS}{{\mathcal S}}
\begin{document}

\title{Outer functions and uniform integrability}

\author[Mashreghi]{Javad Mashreghi}
\address{D\'epartement de math\'ematiques et de statistique, Universit\'e Laval, 
Qu\'ebec City (Qu\'ebec),  Canada G1V 0A6}
\email{javad.mashreghi@mat.ulaval.ca}
\thanks{JM supported by an NSERC grant}

\author[Ransford]{Thomas Ransford}
\address{D\'epartement de math\'ematiques et de statistique, Universit\'e Laval, 
Qu\'ebec City (Qu\'ebec),  Canada G1V 0A6}
\email{thomas.ransford@mat.ulaval.ca}
\thanks{TR supported by grants from NSERC and the Canada Research Chairs program}

\date{12 Mar 2018}

\begin{abstract}
We show that, if $f$ is an outer function and $a\in[0,1)$, then the set of functions
\[
\{\log |(f\circ\psi)^*|: \psi:\DD\to\DD \text{~holomorphic}, |\psi(0)|\le a\}
\]
is uniformly integrable on the unit circle. As an application, we derive a simple
proof of the fact that, if $f$ is outer and $\phi:\DD\to\DD$ is holomorphic, then $f\circ\phi$ is outer.
\end{abstract}

\subjclass[2010]{primary 30H15 ; secondary 28A20}

\keywords{Outer function, Smirnov class, Uniformly integrable}

\maketitle


\section{Introduction}\label{S:intro}

Let $\DD$ be the open unit disk and $\TT$ be the unit circle.
We write $\cS$ for the set of holomorphic functions $\phi:\DD\to\DD$
(essentially the \emph{Schur class}, except that we exclude constant unimodular functions).

A holomorphic function $f:\DD\to\CC$ is called \emph{outer} if it has the form
\begin{equation}\label{E:outer}
f(z)=c\exp\Bigl(\int_\TT \frac{e^{i\theta}+z}{e^{i\theta}-z}\log\rho(e^{i\theta})\,\frac{d\theta}{2\pi}\Bigr)
\quad(z\in\DD),
\end{equation}
where $c$ is a unimodular constant and  $\rho:\TT\to\RR^+$ is a  function such that $\log\rho\in L^1(\TT)$.
Outer functions are a key tool in the theory of Hardy spaces.
Among their many nice properties is the following folklore fact: if $f$ is outer and $\phi\in\cS$,
then $f\circ \phi$ is also outer. This note arose as an attempt to better understand why this fact is true.

We shall study two classes of functions.
The \emph{Nevanlinna class} $N$ consists of those  functions of the form $f=f_1/f_2$,
where $f_1,f_2$ are bounded and holomorphic on $\DD$ and $f_2$ has no zeros.
The \emph{Smirnov class} $N^+$ is the subclass of $N$ consisting of those $f=f_1/f_2$,
where $f_1,f_2$ are bounded and holomorphic on $\DD$ and $f_2$ is outer.

If $f\in N$, then its radial boundary limits 
\[
f^*(e^{i\theta}):=\lim_{r\to1^-}f(re^{i\theta})
\]
exist a.e.\ on $\TT$.
This is a simple consequence of the corresponding result for bounded holomorphic functions,
due to Fatou.
Also, it is clear that, if $f\in N$ and $\phi\in\cS$, then $f\circ\phi\in N$.
The corresponding result for $N^+$ is also true, but rather less obvious.
As we shall see, it is more or less equivalent to the analogous result for outer functions.

The following theorem lists a number of well-known characterizations of $N^+$.
We write $f_r(z):=f(rz)$. Also, we recall that $f$ is called \emph{inner} 
if it is a bounded holomorphic function on $\DD$ satisfying $|f^*|=1$ a.e.\ on $\TT$.

\begin{thmx}\label{T:N+}
Let $f\in N$. The following statements are equivalent:
\begin{enumerate}[\upshape(i)]
\item $f\in N^+$,
\item $f=f_if_o$, where $f_i$ is inner and $f_o$ is outer,
\item $\lim_{r\to 1^-}\int_0^{2\pi}\log^+|f(re^{i\theta})|\,d\theta=\int_0^{2\pi}\log^+|f^*(e^{i\theta})|\,d\theta$,
\item the set $\{\log^+|f_r^*|:0<r<1\}$ is uniformly integrable on $\TT$.
\end{enumerate}
\end{thmx}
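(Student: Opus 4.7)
My plan is to prove the equivalences cyclically via (i)$\Rightarrow$(ii)$\Rightarrow$(iii)$\Rightarrow$(iv)$\Rightarrow$(i), anticipating that the last step, (iv)$\Rightarrow$(i), will be the main obstacle. The first three implications rely on standard Hardy-space tools: inner-outer factorization in $H^\infty$, the Poisson representation of outer functions, Jensen's inequality applied to the convex function $\log^+$, Fatou's lemma, and Scheffé's lemma.

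I would begin with (i)$\Rightarrow$(ii): given $f=f_1/f_2$ with $f_2$ outer, perform the inner-outer factorization $f_1=I\cdot g$, so that $f=I\cdot(g/f_2)$ exhibits $f$ as inner times outer (the quotient $g/f_2$ is outer by direct inspection of formula (\ref{E:outer})). For (ii)$\Rightarrow$(iii), I would observe that $|f^*|=|f_o^*|$ a.e.\ and $\log^+|f|\le\log^+|f_o|$ in $\DD$ (since $|f_i|\le 1$), and apply Jensen's inequality to the convex function $\log^+$ combined with the Poisson representation $\log|f_o(z)|=\int_\TT P_z(e^{i\theta})\log|f_o^*|\,d\theta/(2\pi)$; this yields $\log^+|f_o(re^{i\theta})|\le(P_r*\log^+|f_o^*|)(e^{i\theta})$, which integrates to $\int\log^+|f(re^{i\theta})|\,d\theta\le\int\log^+|f^*|\,d\theta$, while Fatou supplies the reverse inequality for the $\liminf$. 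For (iii)$\Rightarrow$(iv), since $f_r$ extends continuously to $\overline\DD$ and $\log^+|f_r^*|\to\log^+|f^*|$ a.e.\ by Fatou's boundary theorem, and the integrals converge by (iii), Scheffé's lemma delivers $L^1$-convergence, which immediately implies uniform integrability.

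The hard part will be (iv)$\Rightarrow$(i). The key classical input I would invoke is that, for a nonnegative subharmonic function $u$ on $\DD$ with $\sup_r\int u_r<\infty$, uniform integrability of the family $\{u_r:0<r<1\}$ is equivalent to the least harmonic majorant of $u$ being the Poisson integral of $u^*$ (equivalently, the Riesz representing measure of this majorant has no singular part on $\TT$). Applied to $u=\log^+|f|$, this gives $\log^+|f(z)|\le\int_\TT P_z(e^{i\theta})\log^+|f^*(e^{i\theta})|\,d\theta/(2\pi)$ for all $z\in\DD$. I would then introduce the outer function $g$ with $|g^*|=\max(|f^*|,1)$ a.e., so that $\log|g|$ equals the Poisson integral of $\log^+|f^*|$. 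It follows that $|f|\le|g|$ in $\DD$, and because $\log|g|\ge 0$ we also have $|g|\ge 1$ in $\DD$, so $1/g$ lies in $H^\infty$ and is outer. Consequently $f/g\in H^\infty$ with $\|f/g\|_\infty\le 1$, and the factorization $f=(f/g)/(1/g)$ displays $f$ as a quotient of two bounded holomorphic functions with outer denominator, establishing $f\in N^+$.
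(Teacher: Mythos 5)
Your proposal is correct, but note that the paper itself offers no proof of this theorem: it is stated as background (Theorem~A) with the equivalence of (i)--(iii) cited to Duren \cite{Du70} and that of (iii) and (iv) to \cite[Theorem~A.3.7]{EKMR14}. So your argument is a genuinely self-contained alternative. The cyclic scheme works: (i)$\Rightarrow$(ii) via inner--outer factorization of the numerator and the (correct) observation that a quotient of outer functions is outer; (ii)$\Rightarrow$(iii) via Jensen's inequality for $t\mapsto t^+$ against the Poisson probability measure plus Fatou for the reverse inequality; (iii)$\Rightarrow$(iv) via Scheff\'e. Two small points deserve a line each in a written-up version. First, in (iii)$\Rightarrow$(iv), $L^1$-convergence as $r\to1^-$ gives uniform integrability only of the tail $\{r_0\le r<1\}$; you should add that for $r\le r_0$ the functions $\log^+|f_r^*|$ are uniformly bounded by $\log^+\bigl(\sup_{|z|\le r_0}|f|\bigr)$, hence trivially uniformly integrable, and that a union of two uniformly integrable families is uniformly integrable. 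Second, the ``classical input'' you invoke in (iv)$\Rightarrow$(i) is exactly the crux, and since it is less standard than the rest you should justify it: uniform integrability together with the a.e.\ convergence $\log^+|f_r^*|\to\log^+|f^*|$ gives $L^1$-convergence (Vitali), whence the harmonic majorization $\log^+|f(z)|\le P_r[\log^+|f_r^*|](z)$ on $|z|<r$ passes in the limit to $\log^+|f(z)|\le P[\log^+|f^*|](z)$; this is all you need, and it avoids quoting the full equivalence with the vanishing of the singular part of the Riesz measure. With those additions the construction of the outer majorant $g$ with $|g|\ge1$ and the factorization $f=(f/g)/(1/g)$ correctly exhibits $f\in N^+$, and your route for (iv)$\Rightarrow$(i) is arguably more direct than passing back through (iii) and the canonical factorization as in the cited sources.
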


For the equivalence of the first three, see for example  \cite[\S2.5]{Du70}.
A proof of the equivalence of (iii) and (iv) can be found in
\cite[Theorem~A.3.7]{EKMR14}.

Our contribution is the following theorem.
Given $a\in[0,1)$, we write $\cS_a:=\{\psi\in\cS:|\psi(0)|\le a\}$.

\begin{theorem}\label{T:new}
Let $f\in N$ and let $a\in[0,1)$. Then $f\in N^+$ if and only if the set
\[
\{\log^+|(f\circ\psi)^*|:\psi\in\cS_a\}
\]
is uniformly integrable on $\TT$.
\end{theorem}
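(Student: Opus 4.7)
The ``if'' direction is immediate: the rescalings $\psi_r(z):=rz$ lie in $\cS_a$ for every $r\in(0,1)$ (since $\psi_r(0)=0$) and satisfy $(f\circ\psi_r)^*=f_r^*$, so uniform integrability of the family in the statement forces uniform integrability of $\{\log^+|f_r^*|:0<r<1\}$, whence $f\in N^+$ by Theorem~\ref{T:N+}.

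For the ``only if'' direction, I would first use Theorem~\ref{T:N+}(ii) to factor $f=f_i f_o$; since $|f_i|\le 1$, we have $\log^+|(f\circ\psi)^*|\le\log^+|(f_o\circ\psi)^*|$, and it suffices to treat the case when $f$ itself is outer. The plan is then to verify uniform integrability via the de la Vall\'ee-Poussin criterion: exhibit a convex nondecreasing $\Phi\colon[0,\infty)\to[0,\infty)$ with $\Phi(t)/t\to\infty$ for which $\sup_{\psi\in\cS_a}\int_\TT \Phi(\log^+|(f\circ\psi)^*|)\,d\theta/(2\pi)<\infty$.

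The central estimate rests on two pointwise majorizations. Because $f$ is outer, \eqref{E:outer} gives $\log|f(z)|\le U(z):=P[\log^+|f^*|](z)$ (the Poisson integral), and since $U\ge 0$ also $\log^+|f(z)|\le U(z)$. Since $\log^+|f^*|\in L^1(\TT)$, a single-function version of de la Vall\'ee-Poussin supplies a $\Phi$ as above with the additional property $\int_\TT \Phi(\log^+|f^*|)\,d\theta/(2\pi)<\infty$; Jensen's inequality applied to the probability measure $P_z(e^{i\theta})\,d\theta/(2\pi)$ then upgrades the previous bound to $\Phi(\log^+|f(z)|)\le W(z):=P[\Phi(\log^+|f^*|)](z)$, a positive harmonic function on $\DD$. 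Composing with $\psi$ and passing to nontangential boundary values (valid a.e.\ because $f\circ\psi\in N$ and $W\circ\psi$ is positive harmonic) yields $\Phi(\log^+|(f\circ\psi)^*|)\le(W\circ\psi)^*$ a.e.\ on $\TT$.

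Integrating, Herglotz's theorem gives $\int_\TT (W\circ\psi)^*\,d\theta/(2\pi)\le W(\psi(0))$, and the Harnack estimate $P_{\psi(0)}\le(1+a)/(1-a)$ for $|\psi(0)|\le a$ yields $W(\psi(0))\le\tfrac{1+a}{1-a}\int_\TT\Phi(\log^+|f^*|)\,d\theta/(2\pi)$, a constant independent of $\psi\in\cS_a$. The converse direction of de la Vall\'ee-Poussin then delivers the desired uniform integrability. I expect the main obstacle to be the selection of $\Phi$ together with the Jensen maneuver that converts the subharmonic $\Phi(U)$ into a controllable harmonic majorant $W$; once this is arranged, the Herglotz and Harnack bookkeeping is routine.
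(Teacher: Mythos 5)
Your proof is correct, and while its overall scaffolding matches the paper's (a de la Vall\'ee Poussin function, a harmonic majorant, evaluation at $\psi(0)$, then de la Vall\'ee Poussin in reverse), the way you manufacture the harmonic majorant is genuinely different. The paper stays with general $f\in N^+$: it takes $\omega$ from the uniform integrability of $\{\log^+|f_r^*|\}$ (characterization (iv) of Theorem~\ref{T:N+}), observes that $\omega(\log^+|f|)$ is subharmonic with bounded integral means, and invokes a potential-theoretic theorem of Armitage--Gardiner to produce a harmonic majorant $h$, finally bounding $h(\psi(0))$ by $\sup_{|z|\le a}h$. You instead first reduce to outer $f$ via the inner--outer factorization (characterization (ii)) --- a legitimate step, since $0\le\log^+|(f\circ\psi)^*|\le\log^+|(f_o\circ\psi)^*|$ and uniform integrability passes to dominated families --- then choose $\Phi$ from the integrability of the \emph{single} function $\log^+|f^*|$ and build the majorant explicitly as the Poisson integral of $\Phi(\log^+|f^*|)$, using the representation \eqref{E:outer} of $\log|f|$ together with Jensen's inequality; Harnack then supplies the uniform bound $\frac{1+a}{1-a}\int_\TT\Phi(\log^+|f^*|)\,\frac{d\theta}{2\pi}$ over $\cS_a$. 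Your route is more self-contained (no harmonic-majorization theorem from potential theory) and yields an explicit constant, at the cost of the extra factorization step; the paper's is shorter and works directly with $f\in N^+$ without splitting off the inner factor. The endgame --- passing to boundary values (your Herglotz step could equally be replaced by the paper's integration over circles of radius $r$ followed by Fatou's lemma) and the converse direction of Lemma~\ref{L:dlVP} --- is the same in substance.
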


As observed above, if $f\in N$, then $f\circ\psi\in N$ for all $\psi\in\cS$,
and so $(f\circ\psi)^*$ exists a.e.\ on $\TT$.
Thus the statement of the theorem makes sense.
We shall prove this theorem in \S\ref{S:proof}.

Clearly, if $\phi\in\cS$ and $a\in[0,1)$, then
\[
\{\phi\circ\psi: \psi\in\cS_a\}\subset\cS_b,
\]
where $b:=\sup_{|z|\le a}|\phi(z)|\in[0,1)$.  
Theorem~\ref{T:new} therefore immediately implies the following result,
previously obtained by other methods in \cite{YN78} and \cite{CKS00}.

\begin{corollary}\label{C:N+}
If $f\in N^+$ and $\phi\in\cS$, then $f\circ\phi\in N^+$.
\end{corollary}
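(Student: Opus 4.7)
My plan is to obtain Corollary~\ref{C:N+} as essentially a direct application of Theorem~\ref{T:new}, using the composition inclusion noted just above the corollary. The only real content beyond Theorem~\ref{T:new} is the bookkeeping step of matching the uniform integrability families for $f$ and for $f\circ\phi$.

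First, I would check that $f\circ\phi\in N$, which is immediate: the excerpt records that $f\in N$ and $\psi\in\cS$ imply $f\circ\psi\in N$, and this in particular applies to $\phi$. Next, since $\phi$ is holomorphic on $\DD$ with values in $\DD$ and is not a constant unimodular function (because $\cS$ excludes those), the maximum principle gives $b:=|\phi(0)|<1$. Then the inclusion stated in the excerpt, specialized to $a=0$, reads
\[
\{\phi\circ\psi:\psi\in\cS_0\}\subset\cS_b.
\]

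Now to prove $f\circ\phi\in N^+$, I would verify the uniform integrability criterion of Theorem~\ref{T:new} for $f\circ\phi$ with $a=0$, namely that the family
\[
\bigl\{\log^+|((f\circ\phi)\circ\psi)^*|:\psi\in\cS_0\bigr\}
\]
is uniformly integrable on $\TT$. Rewriting $(f\circ\phi)\circ\psi=f\circ(\phi\circ\psi)$ and using the inclusion above, this family is a subfamily of
\[
\bigl\{\log^+|(f\circ\tilde\psi)^*|:\tilde\psi\in\cS_b\bigr\}.
\]
Since $f\in N^+$, Theorem~\ref{T:new} (applied to $f$ with $a=b$) guarantees that this latter family is uniformly integrable, and a subfamily of a uniformly integrable family is uniformly integrable. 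Hence Theorem~\ref{T:new} (in the reverse direction, applied to $f\circ\phi$ with $a=0$) yields $f\circ\phi\in N^+$.

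There is no real obstacle in the argument itself: all the work is hidden inside Theorem~\ref{T:new}, and the only subtlety is the trivial observation that $|\phi(0)|<1$, which is needed to ensure that $b\in[0,1)$ so that Theorem~\ref{T:new} can actually be applied with parameter $a=b$.
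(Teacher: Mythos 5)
Your proof is correct and is essentially the paper's own argument: the paper likewise deduces the corollary from the inclusion $\{\phi\circ\psi:\psi\in\cS_a\}\subset\cS_b$ with $b=\sup_{|z|\le a}|\phi(z)|<1$, applying Theorem~\ref{T:new} once in each direction (your choice $a=0$ is just a harmless specialization, and $|\phi(0)|<1$ holds simply because $\phi(0)\in\DD$, no maximum principle needed).
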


We now return to the subject of outer functions.
The link with $N^+$ is furnished by the observation that a nowhere-vanishing
holomorphic function $f$ on $\DD$ is outer if and only if both
$f\in N^+$ and $1/f\in N^+$.
Indeed, the `only if' is obvious, and the `if' is an easy consequence of the characterization
(ii) of $N^+$ in Theorem~\ref{T:N+}.

Combining this remark with Theorem~\ref{T:new}, 
we obtain the following theorem, which we believe to be new.

\begin{theorem}
Let $f\in N$ with no zeros and let $a\in[0,1)$. Then $f$ is outer if and only if the set
\[
\{\log|(f\circ\psi)^*|:\psi\in\cS_a\}
\]
is uniformly integrable on $\TT$.
\end{theorem}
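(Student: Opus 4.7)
The plan is to reduce the statement to Theorem~\ref{T:new} by invoking the characterization of zero-free outer functions recalled just before the theorem: such an $f$ is outer precisely when both $f\in N^+$ and $1/f\in N^+$. The elementary decompositions
\[
\log|u|=\log^+|u|-\log^+(1/|u|),\qquad \log^+|u|+\log^+(1/|u|)=\bigl|\log|u|\bigr|
\]
then let us translate between the two one-sided UI statements provided by Theorem~\ref{T:new} and the two-sided UI statement sought here.

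As a preliminary step I would verify that $1/f$ itself lies in $N$, so that Theorem~\ref{T:new} can legitimately be applied to $1/f$ as well as to $f$. Writing $f=f_1/f_2$ with $f_1,f_2$ bounded holomorphic on $\DD$ and $f_2$ zero-free, the hypothesis that $f$ has no zeros forces $f_1=ff_2$ to be zero-free, whence $1/f=f_2/f_1\in N$. Moreover, for each $\psi\in\cS$, the composition $f\circ\psi$ is a nontrivial element of $N$, so $(f\circ\psi)^*\ne 0$ a.e.\ on $\TT$ and $((1/f)\circ\psi)^*=1/(f\circ\psi)^*$ a.e.

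For the ``only if'' direction, assuming $f$ is outer gives $f,1/f\in N^+$; two applications of Theorem~\ref{T:new} then yield that both $\{\log^+|(f\circ\psi)^*|:\psi\in\cS_a\}$ and $\{\log^+|((1/f)\circ\psi)^*|:\psi\in\cS_a\}$ are UI on $\TT$. Since the difference of these two families equals $\{\log|(f\circ\psi)^*|:\psi\in\cS_a\}$, that family is UI as well. Conversely, if $\{\log|(f\circ\psi)^*|\}$ is UI, then so are its positive and negative parts separately, which are precisely the two one-sided families above; Theorem~\ref{T:new} applied to $f$ and to $1/f$ then delivers $f,1/f\in N^+$, and hence $f$ is outer.

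I do not foresee any real obstacle. The substantive work is already done inside Theorem~\ref{T:new}; the only mild bookkeeping point is the verification that $1/f\in N$ whenever $f\in N$ has no zeros, since this is what entitles us to apply Theorem~\ref{T:new} symmetrically to $f$ and to $1/f$.
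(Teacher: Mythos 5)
Your proposal is correct and follows essentially the same route as the paper, which derives this theorem in one line by combining Theorem~\ref{T:new} (applied to $f$ and to $1/f$) with the observation that a zero-free $f$ is outer if and only if $f\in N^+$ and $1/f\in N^+$. Your additional bookkeeping --- checking $1/f\in N$ and splitting $\log|(f\circ\psi)^*|$ into its positive and negative parts --- simply fills in details the paper leaves implicit.
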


From this, we  deduce the result mentioned at the beginning of the section.

\begin{corollary}
If $f$ is outer and $\phi\in\cS$, then $f\circ\phi$ is outer.
\end{corollary}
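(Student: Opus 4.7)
The plan is to reduce the corollary to the preceding theorem, mirroring the way Corollary~\ref{C:N+} was deduced from Theorem~\ref{T:new}. Write $g:=f\circ\phi$. I would first verify the hypotheses of the preceding theorem for $g$: since $f$ is outer it is nowhere zero on $\DD$, hence so is $g$; and outerness of $f$ gives $f\in N^+$, so Corollary~\ref{C:N+} yields $g\in N^+\subset N$. Thus $g$ belongs to $N$ and has no zeros.

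Next, fix any $a\in[0,1)$ (one may take $a=0$) and set $b:=\sup_{|z|\le a}|\phi(z)|$. Because $\phi\in\cS$ is not a unimodular constant, the maximum principle forces $b<1$, so $b\in[0,1)$. Applying the `only if' direction of the preceding theorem to the outer function $f$ with parameter $b$, the family
\[
\{\log|(f\circ\eta)^*|:\eta\in\cS_b\}
\]
is uniformly integrable on $\TT$.

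To finish, I would use the inclusion $\{\phi\circ\psi:\psi\in\cS_a\}\subset\cS_b$ already noted in the excerpt. Since $(g\circ\psi)^*=(f\circ(\phi\circ\psi))^*$, this gives
\[
\{\log|(g\circ\psi)^*|:\psi\in\cS_a\}\subset\{\log|(f\circ\eta)^*|:\eta\in\cS_b\},
\]
so the left-hand family inherits uniform integrability on $\TT$. The `if' direction of the preceding theorem applied to $g$ then yields that $g=f\circ\phi$ is outer.

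No genuine obstacle is expected: the substantive work is already done by the preceding theorem, and the deduction uses exactly the composition trick---namely that $\phi$ sends $\cS_a$ into $\cS_b$ for some $b<1$---that produced Corollary~\ref{C:N+} from Theorem~\ref{T:new}.
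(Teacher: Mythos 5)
Your proposal is correct and follows exactly the route the paper intends: the paper deduces the corollary from the preceding theorem via the same observation (already used for Corollary~\ref{C:N+}) that $\phi$ maps $\cS_a$ into $\cS_b$ with $b=\sup_{|z|\le a}|\phi(z)|<1$. Your additional checks (that $f\circ\phi$ is zero-free and lies in $N$, and that $b<1$) are exactly the details the paper leaves implicit.
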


\section{Proof of Theorem~\ref{T:new}}\label{S:proof}

The main idea of the proof is to exploit a criterion for uniform integrability
due to de la Vall\'ee Poussin. For convenience, we include a quick proof.

Let $(X,\mu)$ be a measure space and let $\cG$ be a family of measurable complex-valued
functions on $X$. We recall that $\cG$ is \emph{uniformly integrable} if
\[
\sup_{g\in\cG}\int_{\{|g|\ge t\}}|g|\,d\mu\to0 \quad(t\to\infty).
\]

\begin{lemx}\label{L:dlVP}
The family $\cG$ is uniformly integrable on $(X,\mu)$ if and only if there exists
a function $\omega:\RR\to\RR^+$ with $\lim_{t\to\infty}\omega(t)/t=\infty$ such that
\begin{equation}\label{E:sup}
\sup_{g\in\cG}\int_X\omega(|g|)\,d\mu <\infty.
\end{equation}
The function $\omega$ may be chosen to be convex and increasing.
\end{lemx}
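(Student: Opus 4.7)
The plan is to handle the two directions of Lemma~\ref{L:dlVP} separately: the ``if'' direction is a short application of the growth assumption on $\omega$, while the ``only if'' direction requires constructing $\omega$ from the decay rate of the uniform integrability.

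For the easy direction, I would assume that $\omega$ with the stated properties exists and observe that for every $\varepsilon>0$ one can choose $T$ so large that $\omega(t)\ge t/\varepsilon$ for $t\ge T$. Then for every $g\in\cG$,
\[
\int_{\{|g|\ge T\}}|g|\,d\mu \;\le\; \varepsilon\int_X \omega(|g|)\,d\mu \;\le\; \varepsilon M,
\]
where $M$ denotes the supremum in \eqref{E:sup}. Since $\varepsilon>0$ is arbitrary, uniform integrability follows.

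For the converse, set $a_n:=\sup_{g\in\cG}\int_{\{|g|\ge n\}}|g|\,d\mu$, which tends to $0$ by hypothesis. I would then extract integers $k_1<k_2<\cdots$ with $a_{k_j}\le 2^{-j}$ and define
\[
\omega(t):=\sum_{j=1}^{\infty}(t-k_j)^+ \qquad (t\in\RR).
\]
Since $k_j\to\infty$, only finitely many summands are nonzero at each $t$, so $\omega$ is finite, nonnegative, convex and nondecreasing as a sum of such functions. For fixed $N$ and $t>k_N$,
\[
\frac{\omega(t)}{t}\;\ge\;\sum_{j=1}^{N}\Bigl(1-\frac{k_j}{t}\Bigr)\longrightarrow N \quad(t\to\infty),
\]
so letting $N\to\infty$ gives $\omega(t)/t\to\infty$. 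Finally, using $(|g|-k_j)^+\le |g|\,\chi_{\{|g|\ge k_j\}}$ and monotone convergence,
\[
\int_X \omega(|g|)\,d\mu \;=\; \sum_{j=1}^{\infty}\int_X (|g|-k_j)^+\,d\mu \;\le\; \sum_{j=1}^{\infty} a_{k_j}\;\le\;1,
\]
uniformly in $g\in\cG$, as required.

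The only real choice in the argument is the shape of $\omega$: taking it as a sum of ramp functions $(t-k_j)^+$ automatically delivers convexity and monotonicity, while the rarefaction $k_j\to\infty$ with geometric control $a_{k_j}\le 2^{-j}$ simultaneously balances the two competing demands—$\omega(t)/t\to\infty$ and $\sup_g\int\omega(|g|)\,d\mu<\infty$. That tension between growth and integrability is the only real obstacle; once one commits to converting the rate $a_n\to 0$ into $\omega$ via this ramp construction, both conditions fall out of the same choice of $k_j$.
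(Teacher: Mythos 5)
Your proposal is correct and follows essentially the same route as the paper: the easy direction via the bound $|g|\le\varepsilon\,\omega(|g|)$ on $\{|g|\ge T\}$, and the converse via the same sum-of-ramps construction $\omega(t)=\sum_j(t-k_j)^+$ with thresholds chosen so that the tail integrals are summable. The only differences are cosmetic (integer thresholds and a slightly more explicit verification that $\omega(t)/t\to\infty$).
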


\begin{proof}
Suppose $\omega$ exists. Given $\epsilon>0$, choose $t$ such that 
$\omega(s)/s\ge 1/\epsilon$ for all $s\ge t$. Then, for each $g\in\cG$, we have
\[
\int_{\{|g|\ge t\}}|g|\,d\mu\le \int_{\{|g|\ge t\}}\epsilon\omega(|g|)\,d\mu\le
\epsilon\int_X\omega(|g|)\,d\mu\le \epsilon M,
\]
where $M$ is the supremum in \eqref{E:sup}.

Conversely, suppose that $\cG$ is uniformly integrable.
Choose a positive increasing sequence $t_n\to\infty$ such  that, for each $n$,
\[
\sup_{g\in\cG}\int_{\{|g|\ge t_n\}}|g|\,d\mu\le 2^{-n}.
\]
Define $\omega(t):=\sum_{n\ge1}(t-t_n)^+$.
Clearly $\lim_{t\to\infty}\omega(t)/t=\infty$ and,
for each $g\in\cG$, we have
\[
\int_X\omega(|g|)\,d\mu=\sum_{n\ge1}\int_X(|g|-t_n)^+\,d\mu
\le\sum_{n\ge1}\int_{\{|g|\ge t_n\}}|g|\,d\mu\le\sum_{n\ge1}2^{-n}\le1.
\]
Finally, we note that $\omega$, as constructed above, is  convex and increasing.
\end{proof}

\begin{proof}[Proof of Theorem~\ref{T:new}]
By considering $\psi$ of the form $\psi(z):=rz~(0<r<1)$,
we see that the `if' part of the theorem follows from the characterization
of $N^+$ given in Theorem~\ref{T:N+}\,(iv). 

We now turn to the `only if' part.
Let $f\in N^+$.
By Theorem~\ref{T:N+}\,(iv),  the family $\{\log^+|f_r^*|:0<r<1\}$ is uniformly integrable on $\TT$.
Therefore, by Lemma~\ref{L:dlVP}, there exists a convex increasing  function $\omega:\RR\to\RR^+$
with $\lim_{t\to\infty}\omega(t)/t=\infty$ such that
\begin{equation}\label{E:hm}
\sup_{0<r<1}\int_\TT \omega\Bigl(\log^+|f(re^{i\theta})|\Bigr)\,\frac{d\theta}{2\pi}<\infty.
\end{equation}
Now  $\omega(\log^+|f|)$ is subharmonic on $\DD$, because
$\omega$ is a convex increasing function and $\log^+|f|$ a subharmonic function on $\DD$
(see \cite[Theorem~3.4.3(ii)]{AG01}). 
By \cite[Theorem~3.6.6]{AG01}, the condition~\eqref{E:hm} implies that $\omega(\log^+|f|)$  has a harmonic majorant
on $\DD$, let us call it $h$.
Thus, if $\psi\in \cS_a$,
then for all $r\in(0,1)$ we have
\[
\int_\TT\omega\Bigl(\log^+|(f\circ\psi)(re^{i\theta})|\Bigr)\,\frac{d\theta}{2\pi}
\le \int_\TT (h\circ\psi)(re^{i\theta})\,\frac{d\theta}{2\pi}
=h(\psi(0))\le M,
\]
where $M:=\sup_{|z|\le a}h(z)$.
Letting $r\to1^-$ and using Fatou's lemma, we deduce that
\[
\int_\TT \omega\Bigl(\log^+|(f\circ\psi)^*(e^{i\theta})|\Bigr)\,\frac{d\theta}{2\pi}\le M.
\]
Thus
\[
\sup_{\psi\in\cS_a}\int_\TT \omega\Bigl(\log^+|(f\circ\psi)^*(e^{i\theta})|\Bigr)\,\frac{d\theta}{2\pi}<\infty,
\]
and the result now follows by applying Lemma~\ref{L:dlVP}  in the other direction.
\end{proof}


\end{document}